\documentclass[11pt]{article}
\usepackage[utf8]{inputenc}
\usepackage{amsfonts}
\usepackage{amsmath} 
\usepackage{amssymb}
\usepackage{amsthm}
\usepackage{tikz}
\usepackage[margin=.95in]{geometry}
\usepackage{mathtools}

\newtheorem{theorem}{Theorem}
\newtheorem{lemma}[theorem]{Lemma}
\newtheorem{proposition}[theorem]{Proposition}
\newtheorem{corollary}[theorem]{Corollary}

\newtheorem{definition}[theorem]{Definition}

\newtheorem{question}[theorem]{Question}

\newcommand{\bth}{\begin{theorem}}
\renewcommand{\eth}{\end{theorem}}
\newcommand{\bpr}{\begin{proposition}}
\newcommand{\epr}{\end{proposition}}
\newcommand{\bde}{\begin{definition}}
\newcommand{\ede}{\end{definition}}
\newcommand{\blem}{\begin{lemma}}
\newcommand{\elem}{\end{lemma}}
\newcommand{\bco}{\begin{corollary}}
\newcommand{\eco}{\end{corollary}}
\newcommand{\prove}{\begin{proof}}
\newcommand{\done}{\end{proof}}

\newcommand{\ite}{\begin{itemize}}
\newcommand{\mize}{\end{itemize}}

\newcommand{\ben}{\begin{enumerate}}
\newcommand{\een}{\end{enumerate}}

\DeclareMathOperator{\st}{st}

\title{Pattern Matching in Set Partitions is NP-Complete}
\author{Thomas Grubb\\\texttt{tgrubb@ucsd.edu}\\University of California, San Diego}
\date{\today\\[10pt]
	\begin{flushleft}
	\small Key Words: Avoidance, complexity, pattern matching, permutations, set partitions 
	                                       \\[5pt]
	\small AMS subject classification (2010): 03D15, 05A18
	\end{flushleft}}
\begin{document}
\maketitle
\abstract{In this note we show that pattern matching in permutations is polynomial time reducible to pattern matching in set partitions. In particular, pattern matching in set partitions is NP-Complete.}
\vspace{10pt}
\section{Introduction}
A permutation $\pi = \pi_1\dots\pi_n$ \emph{contains} a permutation $\tau = \tau_1\dots \tau_k$ if  there exist indices $i_1<\dots< i_k$ for which $\pi_{i_1}\dots \pi_{i_k}$ is \emph{order isomorphic} to $\tau$, i.e.
$$
\pi_{i_j}<\pi_{i_{j'}}\text{ if and only if }\tau_{j}<\tau_{j'}.
$$
If $\pi$ does not contain $\tau$ we say $\pi$ \emph{avoids} $\tau$. Determining whether or not an arbitrary permutation $\pi$ contains an arbitrary permutation $\tau$ is known as the \emph{permutation pattern matching problem} (PPM).

The complexity of pattern matching in permutations has been extensively studied. Bose, Buss, and Lebiw initialized this study by showing that permutation pattern matching on generic inputs is NP-complete; moreover, counting the number of occurences of $\tau$ inside $\pi$ is $\#$P-complete \cite{BBL}. They achieve this by reducing the Boolean 3-satisfiability problem to PPM. Subsequent developments have been made by placing restrictions on one or both of $\pi$ or $\tau$. Such results include both algorithm development and theoretical hardness results; see, for instance, \cite{AAAH}, \cite{ALLV}, \cite{AR}, \cite{BL}, \cite {GM}, \cite{GV}, \cite{Iba}, \cite{JK}, and \cite{YS}. These results can be pragmatic in that they may allow for efficient collection of data regarding permutation patterns; such data can be used to formulate conjectures in the field of permutation patterns. See, for instance, the database of Tenner \cite{DPPA}. For a more complete introduction to permutation patterns we recommend \cite{Kit}.

To date the hardness of pattern matching has not received much interest in other combinatorial contexts. We hope that this note may spur interest regarding pattern matching in \emph{set partitions}. Given a set $S$, a set partition of $S$ is an unordered collection of disjoint \emph{blocks} $\{B_1,\dots,B_k\}$ for which $B_1\cup\dots \cup B_k = S$. We will be concerned with set partitions of $[n]:=\{1,2,\dots,n\}$ for a positive integer $n$. 

Given a subset $T\subset [n]$, define the \emph{standardization map} $\st:T\to [\#T]$ as the map which sends the $i$th smallest element of $T$ to $i$. Given a partition $\sigma = \{B_1,\dots,B_k\}$ of $[n]$ and $T\subset [n]$ define the partition $\sigma\cap T$ of $[\#T]$ as the partition whose blocks are the nonempty sets of the form $\st(B_i \cap T)$. For example, if $\sigma = \{\{1,3\},\{2,4\}\}$ and $T = \{1,3,4\}$ then $$\sigma \cap T = \{\{1,2\},\{3\}\}.$$ 
Given a partition $\sigma$ of $[n]$ and a partition $\sigma'$ of $[k]$ we say $\sigma$ \emph{contains} $\sigma'$ if there is a subset $T\subset [n]$ for which $\sigma\cap T = \sigma'$. If this is not the case, then we say $\sigma$ \emph{avoids} $\sigma'$.

The combinatorics of set partition patterns has been well studied; see, for instance, \cite{Sag} and references therein. It is known that set partition patterns can also occasionally shed light to the theory of permutation patterns, specifically in the context of combinatorial statistics \cite{LF}. The main result of this note is to introduce another relation between set partition patterns and permutation patterns and to encourage the exploration of the complexity of algorithmic questions regarding set partition patterns. To this end we call the problem of determining whether or not a partition $\sigma$ contains a pattern $\sigma'$ the \emph{set partition pattern matching problem} (SPPM). 

The main result of this paper is as follows. The proof will come in Section 2.
\begin{theorem}
The PPM problem is polynomial time reducible to the SPPM problem. 
\end{theorem}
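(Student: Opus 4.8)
The plan is to exhibit an explicit, polynomial-time computable encoding $\phi$ that turns a permutation into a set partition faithfully to pattern containment. Given a permutation $\rho = \rho_1 \cdots \rho_m$ of $[m]$, I would define $\phi(\rho)$ to be the set partition of $[2m]$ whose blocks are the pairs $\{i,\; m + \rho_i\}$ for $i \in [m]$. Thinking of the first $m$ points as ``position markers'' (in position order) and the last $m$ points as ``value markers'' (in value order), this is exactly the arc-diagram representation of $\rho$ as a perfect matching between positions and values; since $i \mapsto m+\rho_i$ is a bijection onto the value markers, the pairs are disjoint and $\phi(\rho)$ is a genuine set partition, clearly computable in linear time. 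The reduction sends an instance $(\tau,\pi)$ of PPM, where $\tau$ is a permutation of $[k]$ and $\pi$ of $[n]$, to the instance $(\phi(\tau),\phi(\pi))$ of SPPM, and the theorem reduces to proving that $\pi$ contains $\tau$ if and only if $\phi(\pi)$ contains $\phi(\tau)$.

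For the forward direction, suppose $\pi$ contains $\tau$ through indices $i_1 < \cdots < i_k$, so that $\pi_{i_1}\cdots\pi_{i_k}$ is order isomorphic to $\tau$. I would take $T$ to be the union of the $k$ full blocks meeting these positions, namely $T = \{i_1,\dots,i_k\} \cup \{n + \pi_{i_1}, \dots, n + \pi_{i_k}\}$, and compute $\phi(\pi)\cap T$ directly. Since every position marker precedes every value marker, $\st$ sends the position markers to $1,\dots,k$ in order and the value markers to $k+1,\dots,2k$ in order; and because the relative order of the values $\pi_{i_j}$ is governed by $\tau$, the rank of $\pi_{i_j}$ among the selected values is exactly $\tau_j$. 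Tracking this through $\st$ shows each surviving block standardizes to $\{j,\, k + \tau_j\}$, so $\phi(\pi)\cap T = \phi(\tau)$.

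The converse is where the real content lies, and I expect it to be the main obstacle. The danger is that an occurrence of $\phi(\tau)$ inside $\phi(\pi)$ might arise from a set $T$ that selects endpoints incoherently rather than from a genuine family of arcs. The key observation ruling this out is that every block of $\phi(\tau)$ has size exactly $2$: since $\phi(\pi)\cap T$ is obtained by standardizing the nonempty restrictions $B_i \cap T$, and each original block has size $2$, any block meeting $T$ in a single point would contribute a singleton to $\phi(\pi)\cap T$, which $\phi(\tau)$ does not contain. Hence any $T$ realizing $\phi(\tau)$ must be a union of exactly $k$ complete blocks of $\phi(\pi)$, say the arcs through positions $i_1 < \cdots < i_k$. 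Repeating the standardization computation from the forward direction then forces the block through $i_j$ to standardize to $\{j,\, k + \tau_j\}$, which pins the rank of $\pi_{i_j}$ among the selected values to be $\tau_j$; this is precisely the statement that $\pi_{i_1}\cdots\pi_{i_k}$ is order isomorphic to $\tau$. Thus the selected positions witness that $\pi$ contains $\tau$, completing the equivalence and hence the reduction.
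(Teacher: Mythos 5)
Your proposal is correct and is essentially identical to the paper's own argument: the same encoding $\{i,\pi_i+n\}$, the same choice of $T$ in the forward direction, and the same key observation in the converse that size-$2$ blocks force $T$ to be a union of complete blocks. No issues.
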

In particular, this implies that SPPM is NP-Complete and that the corresponding counting problem is $\#$P-Complete. More than this it shows that techniques for examining set partition patterns can potentially be ported over to give insight into permutation patterns. We hope that this may spark further interest and research into the complexity of pattern matching in a wider combinatorial context.
\section{Main Result}
The sole purpose of this section is to prove Theorem 1. We will follow this section with a concluding section in which we discuss several corollaries and possible avenues for future research. 
\begin{proof}[Proof of Theorem 1]
To prove this result we take as input two permutations $\pi = \pi_1\dots \pi_n$ and $\tau = \tau_1\dots \tau_k$ and produce two partitions $s(\pi)$ and $s(\tau)$ with the following properties:
\begin{itemize}
    \item $\pi$ contains $\tau$ if and only if $s(\pi)$ contains $s(\tau)$, and 
    \item $s(\pi)$ and $s(\tau)$ are partitions of $[2n]$ and $[2k]$ respectively.
\end{itemize}
The first property ensures that an algorithm which can match set partitions can match permutations. The second property ensures that in passing between the two problems our input size is only doubled. We claim the partitions 
\begin{align*}
    s(\pi) &= \{\{1,\pi_1+n\},\{2,\pi_2+n\},\dots,\{n,\pi_n+n\}\},\\
    s(\tau) &= \{\{1,\tau_1+k\},\{2,\tau_2+k\},\dots,\{k,\tau_k+k\}\}
\end{align*}
satisfy the desired properties. Clearly they are of the desired size. 

Assume that $\pi$ contains $\tau$, and let $\pi_{i_1}\dots \pi_{i_k}$ be an occurrence of $\tau$ in $\pi$. Let $T\subset [2n]$ be the set $$T=\{i_1,\dots,i_k,\pi_{i_1}+n,\dots, \pi_{i_k}+n\}.$$
The partition $T\cap \pi$ is given by
$$
T\cap s(\pi) = \{\st(\{i_1,\pi_{i_1}+n\}),\dots,\st(\{i_k,\pi_{i_k}+n\})\}.
$$
The fact that $T\cap s(\pi) = s(\tau)$ follows from the next three facts:
\begin{itemize}
    \item $i_1<\dots<i_k$,
    \item $\max(\{i_1, \dots, i_k\}) < n+\min(\{\pi_{i_1}, \dots ,\pi_{i_k}\})$,
    \item $\pi_{i_j}<\pi_{i_{j'}}$ if and only if $\tau_{j} < \tau_{j'}$.
\end{itemize}
These follow directly from the definitions.

For the reverse direction, we show that if $s(\pi)$ contains $s(\tau)$ then $\pi$ contains $\tau$. Accordingly let $T\subset [2n]$ be such that $T\cap s(\pi) = s(\tau)$. As both $s(\pi)$ and $s(\tau)$ consist solely of blocks of size $2$, $T\cap s(\pi)$ must be formed by standardizing a subset of the blocks of $s(\pi)$. In other words $T$ must be of the form $T= S_1\cup S_2$, with 
\begin{align*}
    S_1 &= \{i_1,\dots, i_k\} \subset [n],\\
    S_2 &= \{\pi_{i_j}+n: i_j \in S_1\}.
\end{align*}
We may assume without loss of generality that $i_1<\dots< i_k$; in this case it is straightforward to see that $\pi_{i_1}\dots \pi_{i_k}$ gives an occurrence of $\tau$ inside $\pi$ as desired.
\end{proof}
\section{Concluding Remarks}
We end this note with several remarks. First, the following corollary follows from the corresponding facts about the permutation pattern matching problem, as shown in \cite{BBL}.
\begin{corollary}
The SPPM problem is NP-Complete and the corresponding counting problem is  $\#$P-Complete. \qed
\end{corollary}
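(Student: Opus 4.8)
The plan is to deduce the corollary from Theorem 1 together with the hardness results of \cite{BBL}, after first disposing of the two membership claims. For the decision problem, a subset $T\subset[n]$ witnessing $\sigma\cap T=\sigma'$ serves as a polynomial-size certificate: given such a $T$ one computes $\sigma\cap T$ by intersecting each block of $\sigma$ with $T$, applying $\st$, and discarding empty sets, and then compares the result with $\sigma'$, all in polynomial time. Hence SPPM lies in NP. For the counting version, the quantity to be computed is the number of subsets $T\subset[n]$ with $\sigma\cap T=\sigma'$, and the same verification procedure exhibits this count as the number of accepting branches of a nondeterministic polynomial-time machine that guesses $T$ and accepts exactly when $\sigma\cap T=\sigma'$; thus the counting problem lies in $\#$P.

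Next I would establish hardness for the decision problem, which is immediate: Theorem 1 provides a polynomial-time reduction from PPM to SPPM, and since PPM is NP-hard \cite{BBL}, SPPM is NP-hard as well. Combined with the membership just verified, this yields NP-completeness of SPPM.

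The substantive point is the $\#$P-hardness of counting, for which I would argue that the reduction of Theorem 1 is in fact \emph{parsimonious}. Rereading that proof, one sees it does more than establish a decision-level equivalence: the forward direction sends each occurrence $i_1<\dots<i_k$ of $\tau$ in $\pi$ to the set $T=\{i_1,\dots,i_k\}\cup\{\pi_{i_1}+n,\dots,\pi_{i_k}+n\}$, while the reverse direction shows every $T$ with $T\cap s(\pi)=s(\tau)$ arises in exactly this way from a unique such occurrence. The crux, and the only delicate step, is the observation that since every block of $s(\pi)$ has size two and $s(\tau)$ contains no singleton blocks, any valid $T$ must be a union of whole blocks of $s(\pi)$; this forces the decomposition $T=S_1\cup S_2$ of the proof and rules out the possibility of a single subset $T$ corresponding to several distinct occurrences. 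This correspondence is therefore a bijection between occurrences of $\tau$ in $\pi$ and subsets $T$ with $T\cap s(\pi)=s(\tau)$, so the two counts agree exactly. Since counting occurrences of a permutation pattern is $\#$P-hard \cite{BBL}, the count-preserving reduction transfers this to SPPM, and together with membership in $\#$P the counting problem is $\#$P-complete.
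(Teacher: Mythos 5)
Your proof is correct and takes the route the paper intends: the paper states this corollary with no written proof, deferring entirely to Theorem 1 and \cite{BBL}, and your argument supplies exactly the missing justification (NP/$\#$P membership via the certificate $T$, plus hardness transferred through the reduction). Your observation that the reduction of Theorem 1 is parsimonious---because every valid $T$ must be a union of whole size-two blocks of $s(\pi)$, giving a bijection between occurrences of $\tau$ in $\pi$ and witnesses $T$---is the one point genuinely needed for the $\#$P-completeness claim, and it is argued correctly.
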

It is not clear whether or not pattern matching in set partitions should reduce to pattern matching in permutations. Given two partitions $\sigma$ and $\sigma'$, it seems a much subtler question to produce permutations $p(\sigma)$ and $p(\sigma')$ for which $\sigma$ contains $\sigma'$ if and only if $p(\sigma)$ contains $p(\sigma')$. It would be interesting to further examine this possibility.

There is a weaker notion of containment and avoidance in set partitions which is defined in terms of their corresponding \emph{restricted growth function (RGF)}; see \cite{REU} for definitions. Define the RGF pattern matching problem as the question of determining whether one set partition contains another with respect to this notion of containment.  The next corollary follows from the fact that for the partitions appearing in the proof of Theorem 1, the two notions of containment coincide.
\begin{corollary}
The permutation pattern matching problem is polynomial time reducible to the RGF pattern matching problem. In particular, the RGF pattern matching problem is NP-Complete and the corresponding counting problem is $\#$P-Complete. \qed
\end{corollary}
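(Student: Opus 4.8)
The plan is to reuse the very same reduction $s$ constructed in the proof of Theorem~1, and to argue that on its image the RGF notion of containment agrees with the set partition notion used there. Recall that a set partition of $[m]$ is encoded by its RGF $w_1\cdots w_m$, where $w_j$ is the index of the block containing $j$ once the blocks are listed in increasing order of their minima, and that this encoding is a bijection between set partitions of $[m]$ and restricted growth functions of length $m$; RGF containment of one partition in another asks for a subword of the larger RGF that standardizes to the RGF of the smaller. Since $s$ is already polynomial time computable and only doubles the input size, it suffices to prove that for the partitions $s(\pi)$ and $s(\tau)$ the two notions of containment coincide, for then
$$ \pi \text{ contains } \tau \iff s(\pi) \text{ contains } s(\tau) \iff s(\pi) \text{ RGF-contains } s(\tau), $$
the first equivalence being Theorem~1.

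First I would record the easy inclusion. Restricting a partition $\sigma$ to a subset $T$ corresponds exactly to taking the subword of the RGF of $\sigma$ indexed by $T$ and standardizing it; hence ordinary containment always implies RGF containment (this is the sense in which the RGF notion is weaker, cf.\ \cite{REU}). For the partitions at hand this is even more transparent: in $s(\pi)$ each block $\{i,\pi_i+n\}$ has minimum $i$, so block $i$ receives label $i$ and the RGF of $s(\pi)$ is
$$ 1\,2\,\cdots\,n\;\pi^{-1}(1)\,\pi^{-1}(2)\,\cdots\,\pi^{-1}(n), $$
in which every symbol occurs exactly twice; the same holds for $s(\tau)$.

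The main point is the reverse inclusion: that RGF containment of $s(\tau)$ in $s(\pi)$ forces genuine containment. Suppose the RGF of $s(\tau)$ is the standardization of the subword of the RGF of $s(\pi)$ indexed by positions $p_1<\cdots<p_{2k}$. Standardization preserves the pattern of which positions carry equal values, and in the RGF of $s(\tau)$ every symbol appears exactly twice; therefore the chosen positions must split into $k$ pairs of equal $s(\pi)$-label. Since each label indexes exactly one block of $s(\pi)$, and every block has size two, each such pair is precisely the two elements of a block. Thus the selected positions form a set $T$ that is a union of $k$ complete blocks of $s(\pi)$, so $s(\pi)\cap T$ is an honest set partition whose RGF is the standardized subword, namely the RGF of $s(\tau)$. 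As the RGF encoding is a bijection this gives $s(\pi)\cap T = s(\tau)$, which is ordinary containment; the reverse direction of the proof of Theorem~1 then extracts an occurrence of $\tau$ in $\pi$. The only real obstacle is this forcing step --- ruling out occurrences that use a block of $s(\pi)$ only partially --- and it is exactly the ``each symbol appears twice'' structure of the pattern that makes it go through.

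With the coincidence established, the reduction $\pi\mapsto s(\pi)$, $\tau\mapsto s(\tau)$ witnesses that PPM is polynomial time reducible to RGF pattern matching. The NP-completeness and $\#$P-completeness statements then follow exactly as in Corollary~2, by appeal to the corresponding results for PPM in \cite{BBL}.
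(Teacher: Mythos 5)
Your proposal is correct and follows exactly the route the paper intends: the paper's entire justification is the single remark that ``for the partitions appearing in the proof of Theorem 1, the two notions of containment coincide,'' and your argument --- that every symbol of the RGF of $s(\tau)$ occurs exactly twice, so any standardizing subword of the RGF of $s(\pi)$ must select whole blocks, collapsing RGF containment to ordinary containment --- is precisely the verification of that remark. No discrepancies to report.
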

It would be interesting to refine Theorem 1. Let us discuss an avenue for doing so, following \cite{JK}. For a partition $\pi$, let $Av(\pi)$ denote the set of all partitions which avoid $\pi$. Let $Av(\pi)$-SPPM denote the problem in which one is given an arbitrary set partition $\sigma$ and a set partition $\sigma'$ which is known to avoid $\pi$, and one must decide whether or not $\sigma$ contains $\sigma'$. The complexity of this restricted problem now depends strongly on the pattern $\pi$; for example, we have the following:
\begin{proposition}
There are polynomial time algorithms for solving $Av(\pi)$-SPPM if $\pi = \{\{1,2\}\}$ or $\pi = \{\{1\},\{2\}\}.$
\end{proposition}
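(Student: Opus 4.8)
The plan is to observe that avoiding either of these two patterns forces $\sigma'$ to have an extremely rigid shape, after which the containment question collapses to a trivial statistic of $\sigma$ that can be read off in linear time.

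First I would treat $\pi = \{\{1\},\{2\}\}$. A partition avoids $\{\{1\},\{2\}\}$ precisely when it contains no two elements lying in distinct blocks: any $\sigma'$ with at least two blocks contains $\{\{1\},\{2\}\}$ by selecting one element from each. Hence a valid input $\sigma'$ must be the single-block partition $\{\{1,2,\dots,k\}\}$. Given this, an arbitrary $\sigma$ contains $\sigma'$ if and only if $\sigma$ has a block of size at least $k$: choosing any $k$ elements from such a block produces a set $T$ with $\sigma\cap T = \sigma'$, and conversely every occurrence requires $k$ elements inside a common block. The algorithm therefore scans the block sizes of $\sigma$ and compares the maximum against $k$, which is linear in the input.

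Next I would handle $\pi = \{\{1,2\}\}$ dually. A partition avoids $\{\{1,2\}\}$ exactly when no two elements share a block, so the input $\sigma'$ must be the all-singletons partition $\{\{1\},\{2\},\dots,\{k\}\}$. Now $\sigma$ contains this $\sigma'$ if and only if $\sigma$ has at least $k$ blocks: selecting one representative from each of $k$ distinct blocks yields a set $T$ whose standardization is the all-singletons partition, while conversely no occurrence is possible if $\sigma$ has fewer than $k$ blocks. The algorithm thus counts the blocks of $\sigma$ and compares against $k$, again in linear time.

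I do not expect any genuine obstacle here; the entire content lies in the first step, namely recognizing that the avoidance hypothesis pins $\sigma'$ down to one of these two degenerate families. Once that structural observation is in place, each algorithm reduces to computing a single statistic of $\sigma$ (its largest block size, or its number of blocks) and testing an inequality against $k$. The only point requiring a small amount of care is the ``only if'' direction of each equivalence, but both follow immediately from the definition of $\sigma \cap T$ as the standardization of the intersected blocks.
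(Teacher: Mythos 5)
Your proof is correct and follows essentially the same route as the paper: the avoidance hypothesis forces the pattern to be either the all-singletons partition or the single-block partition, after which containment reduces to comparing $k$ against the number of blocks or the maximum block size of $\sigma$, respectively. No issues.
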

\begin{proof}
Let $\sigma$ be an arbitrary set partition, and let $\tau$ be a set partition avoiding $\{\{1,2\}\}$. As $\tau$ avoids $\{\{1,2\}\}$ it cannot have a block of size $\geq 2$, so it must be of the form $\tau = \{\{1\},\{2\},\dots,\{k\}\}$. To determine if $\sigma$ contains a copy of $\tau$ we merely need to verify if $\sigma$ has more than $k$ blocks, and hence $Av(\{\{1,2\}\})$-SPPM can be solved in polynomial time. 

Now suppose $\tau$ avoids $\{\{1\},\{2\}\}$. This requires $\tau$ to consist of a single block, i.e. $\tau = \{\{1,2,\dots,k\}\}$. In particular $\sigma$ avoids $\tau$ if and only if it has no block of size $\geq k$. Thus $Av(\{\{1\},\{2\}\})$-SPPM can also be solved in polynomial time.
\end{proof}
One may find patterns $\pi$ for which $Av(\pi)$-SPPM is NP-Complete by appealing to \cite{JK} and invoking Theorem 1. It would be desirable to give a full classification in this setting, in analogy to Theorem 1.3 of \cite{JK}.
\begin{question}
For which $\pi$ is the $Av(\pi)$-SPPM problem NP-Complete? What if the RGF notion of containment is used?
\end{question}
Finally, we encourage a deeper exploration of hardness and of algorithm design in a wider combinatorial context. For instance, there is a natural notion of containment in ascent sequences; is the corresponding pattern matching problem NP-Complete? What sort of algorithms can one produce for pattern matching in set partitions or other combinatorial objects? Are there natural conjectures which follow from data collected from these algorithms? We believe many questions of this form would provide natural follow up projects.

\textbf{Acknowledgements:} Many thanks to Jason O'Neill for reading a preliminary draft of this paper. The author is grateful to acknowledge funding from the NSF Research Training Group grant DMS-1502651 and from the NSF grant DMS-1849173.
\bibliography{ref}{}

\begin{thebibliography}{AAAH01}

\bibitem[AAAH01]{AAAH}
Michael~H. Albert, Robert E.~L. Aldred, Mike~D. Atkinson, and Derek~A. Holton.
\newblock Algorithms for pattern involvement in permutations.
\newblock In {\em Algorithms and computation ({C}hristchurch, 2001)}, volume
  2223 of {\em Lecture Notes in Comput. Sci.}, pages 355--366. Springer,
  Berlin, 2001.

\bibitem[ALLV16]{ALLV}
Michael Albert, Marie-Louise Lackner, Martin Lackner, and Vincent Vatter.
\newblock The complexity of pattern matching for 321-avoiding and skew-merged
  permutations.
\newblock {\em Discrete Math. Theor. Comput. Sci.}, 18(2):Paper No. 11, 17,
  2016.

\bibitem[AR08]{AR}
Shlomo Ahal and Yuri Rabinovich.
\newblock On complexity of the subpattern problem.
\newblock {\em SIAM J. Discrete Math.}, 22(2):629--649, 2008.

\bibitem[BBL98]{BBL}
Prosenjit Bose, Jonathan~F. Buss, and Anna Lubiw.
\newblock Pattern matching for permutations.
\newblock {\em Inform. Process. Lett.}, 65(5):277--283, 1998.

\bibitem[BL16]{BL}
Marie-Louise Bruner and Martin Lackner.
\newblock A fast algorithm for permutation pattern matching based on
  alternating runs.
\newblock {\em Algorithmica}, 75(1):84--117, 2016.

\bibitem[GM14]{GM}
Sylvain Guillemot and D\'{a}niel Marx.
\newblock Finding small patterns in permutations in linear time.
\newblock In {\em Proceedings of the {T}wenty-{F}ifth {A}nnual {ACM}-{SIAM}
  {S}ymposium on {D}iscrete {A}lgorithms}, pages 82--101. ACM, New York, 2014.

\bibitem[GV09]{GV}
Sylvain Guillemot and St\'{e}phane Vialette.
\newblock Pattern matching for 321-avoiding permutations.
\newblock In {\em Algorithms and computation}, volume 5878 of {\em Lecture
  Notes in Comput. Sci.}, pages 1064--1073. Springer, Berlin, 2009.

\bibitem[Iba97]{Iba}
Louis Ibarra.
\newblock Finding pattern matchings for permutations.
\newblock {\em Inform. Process. Lett.}, 61(6):293--295, 1997.

\bibitem[JK17]{JK}
V\'{\i}t Jel\'{\i}nek and Jan Kyn\v{c}l.
\newblock Hardness of permutation pattern matching.
\newblock In {\em Proceedings of the {T}wenty-{E}ighth {A}nnual {ACM}-{SIAM}
  {S}ymposium on {D}iscrete {A}lgorithms}, pages 378--396. SIAM, Philadelphia,
  PA, 2017.

\bibitem[JM08]{REU}
V\'{\i}t Jel\'{\i}nek and Toufik Mansour.
\newblock On pattern-avoiding partitions.
\newblock {\em Electron. J. Combin.}, 15(1):Research paper 39, 52, 2008.

\bibitem[Kit11]{Kit}
Sergey Kitaev.
\newblock {\em Patterns in permutations and words}.
\newblock Monographs in Theoretical Computer Science. An EATCS Series.
  Springer, Heidelberg, 2011.
\newblock With a foreword by Jeffrey B. Remmel.

\bibitem[LF17]{LF}
Zhicong Lin and Shishuo Fu.
\newblock On 1212-avoiding restricted growth functions.
\newblock {\em Electron. J. Combin.}, 24(1):Paper No. 1.53, 20, 2017.

\bibitem[Sag10]{Sag}
Bruce~E. Sagan.
\newblock Pattern avoidance in set partitions.
\newblock {\em Ars Combin.}, 94:79--96, 2010.

\bibitem[Ten]{DPPA}
Bridget~E. Tenner.
\newblock Database of permutation pattern avoidance.
\newblock Published electronically at {\tt
  https://math.depaul.edu/\textasciitilde bridget/patterns.html}.
\newblock Accessed: 2020-04-02.

\bibitem[YS05]{YS}
V.~Yugandhar and Sanjeev Saxena.
\newblock Parallel algorithms for separable permutations.
\newblock {\em Discrete Appl. Math.}, 146(3):343--364, 2005.

\end{thebibliography}
\bibliographystyle{alpha}
\end{document}